\newtheorem{Theorem}{Theorem}[section]
\newtheorem{Proposition}[Theorem]{Proposition}
\newtheorem{Definition}[Theorem]{Definition}
\newtheorem{rem}[Theorem]{Remark}
\numberwithin{equation}{section}
\newcommand{\R}{\mathbb R}
\newcommand{\N}{\mathbb N}
\newcommand{\Z}{\mathbb Z}
\newcommand{\F}{\mathcal{F}}
\newcommand{\p}{\mathcal{P}}
\title[On diffeological PDO bundles]{On diffeological principal bundles of non-formal pseudo-differential operators over formal ones}
\author{Jean-Pierre Magnot}
	\address{University of Angers, CNRS, LAREMA, SFR MATHSTIC, F-49000 Angers, France and   Lyc\'ee Jeanne d'Arc,  Avenue de Grande Bretagne,  63000 Clermont-Ferrand, France}
\begin{document}
	\begin{abstract}
		We describe the structure of diffeological bundle of non formal classical pseudo-differential operators over formal ones, and its structure group. For this, we give few results on diffeological principal bundles with (a priori) no local trivialization, use the smoothing connections alrealy exhibited by the author in previous works, and finish with open questions.
	\end{abstract}

		\maketitle
	\textit{Keywords:} diffeology, principal bundle, pseudo-differential operator, smoothnig operator, index theory.
	
	\textit{MSC (2020):  53C05,57R55,58B05,58B10,58J40. }
	\section{Introduction}	
	The correspondence between formal and non-formal pseudo-differential operators plays a highly important role in analysis. Over the full interest of non-formal pseudodifferential operators for spectral analysis, theory of partial differential equations and differential operators, integration on infinite dimensional spaces, renormalization among other themes, over the crucial place of formal pseudo-differential operators for integrable systems, theory of r-matrices, representation theory, deformation quantization among other fields of studies, it often happens that technical steps can be overcome by passing from a family of non-formal pseudo-differential operators to its formal coounterpart, and by interpreting the results. The reverse procedure also exists, but less often because there is no canonical way to associate a non-formal pseudo-differential operator to a formal one. 
	
	One geometric description of this correspondence between non-formal and formal pseudo-differential operators is the aim of this work. For this, and in order  to circumvent a lack of differential geometric framework, we use diffeologies, a  generalized setting for differential geometry first describe by Chen and Souriau. in this setting, we describe a structure analog to the one of principal bundles, with mild weaker conditions that are described. This leads to open questions that may help to clarify, for example, the results on the index of pseudo-differential operators that actualy obtined through arguments coming from K-theory.   
	\section{On diffeological bundles}
	\subsection{Preliminary notions}
	The main reference for a comprehensive exposition on diffeologies is \cite{Igdiff}. This reference will be completed all along the text for specific concerns. 
	\begin{Definition}[Diffeology] \label{d:diffeology}
		Let $X$ be a set.  A \textbf{parametrisation} of $X$ is a
		map of sets
		$p \colon U \to X$ where $U$ is an open subset of Euclidean space (no fixed dimension).  A \textbf{diffeology} $\p$ on $X$ is a set of
		parametrisations satisfying the following three conditions:
		\begin{enumerate}
			\item (Covering) $\forall x\in X,$  
			$\forall n \in \N$, the constant function $p\colon \R^n\to\{x\}\subset X$ is in
			$\p$.
			\item (Locality) Let $p\colon U\to X$ be a parametrisation such that for
			every $u\in U$ there exists an open neighbourhood $V\subset U$ of $u$
			satisfying $p|_V\in\p$. Then $p\in\p$.
			\item (Smooth Compatibility) Let $(p\colon U\to X)\in\p$.
			Then for every $n$, every open subset $V\subset\R^n$, and every
			smooth map $F\colon V\to U$, we have $p\circ F\in\p$.
		\end{enumerate}
		A set $X$ equipped with a diffeology $\p$ is called a
		\textbf{diffeological space}, and the parametrisations $p\in\p$ are called \textbf{plots}.
	\end{Definition}
	Let $(X,\p)$ and $(Y,\p')$ be two diffeological space. Then $f:X\rightarrow Y$ is smooth if $f(\p)\subset \p'.$
	At this step of the exposition, and for the clarity of developments above, we have to precise some technical features and necessary notations. We have to precise that any finite or infinite dimensional manifold $M$ has a diffeology $\p_\infty(M)$ called ``nebulae'', constructed as follows:
	{{}
		{Let }$O${ be an open subset of a Euclidian space; } $$\p_\infty(M)_O=
		\coprod_{p\in\N}\{\, f : O \rightarrow M; \,  f \in C^\infty(O,M) \quad \hbox{(in
			the usual sense)}\}$$
		and 
		$$ \p_\infty(\F) = \bigcup_O \p_\infty(\F)_O,$$
		where the latter union is extended over all open sets $O \subset \R^n$ for $n \in \N^*.$ 
	}
	
	We choose to note by $\p(X)$ the diffeology of a diffeological space $X.$ From this first diffeology, one can produce many others, and especially two: 
	\begin{itemize}
		\item the 1-dimensional diffeology $\p_1(X)$ made of plots $p \in \p(X)$ that read locally as $p|_O = \gamma \circ f$ where $\gamma \in C^\infty(\R,X)$ , $O$ is an open subset of an Euclidian space and $f \in C^\infty(O,\R).$
		\item the nebulae diffology, which definition which follows refines the one given independently in \cite{Ma2006-3,Ma2013} and \cite{Wa}, and extends the definition of the nebulae diffeology of a manifold.
		{{}
			{Let }$O${ be an open subset of a Euclidian space equipped with its $\p_1-$diffeology; } $$\p_\infty(\F)_O=
			\coprod_{p\in\N}\{\, f : O \rightarrow X; \, f \subset C^\infty(O,X) \quad \hbox{in
				the sense of } \p_1(O) \hbox{ and } \p(X)\}$$
			and 
			$$ \p_\infty(\F) = \bigcup_O \p_\infty(\F)_O,$$
			where the latter union is extended over all open sets $O \subset \R^n$ for $n \in \N^*.$ This notion refines the notion of nebulae diffeology in the following sense: there exists some diffeological spaces $X$ which have very few smooth maps from $X$ to $\R,$ and the definition of the nebuale diffeology present in e.g. \cite{Ma2006-3,Ma2013} is based on the existence of ``enough'' smooth functions in $C^\infty(X,\R).$ Therefore, this "new" notion of nebulae diffeology seems more intrinsic to us, and not exactly the same as the one presented in the existing litterature.
		}   
	\end{itemize}

	A group $G$ is a {\bf diffeological group} if multiplication and inversion are smooth for the underlying diffeology.
	
	A diffeological group has a kinematic tangent space at identity, but the bracket may not exist. If there is a smooth bracket, $G$ is called {\bf diffeological Lie group}. If moreover there is an exponential map from $C^\infty(\R,\mathfrak{g})$ to $C^\infty(\R,G),$ that is, a smooth map $exp$ such that $exp(v)(t)=g(t)$ if and only if $g$ is the unique solution of the (right logarithmic) differential equation 
	$$ \partial_t g . g^{-1} = v,$$ with initial value $g(0)=1_G,$ then $G$ is called {\bf regular} \cite{Les}. 
	If $\mathfrak{g}$ is regular as an abelian diffeological group (vector space) we say that $G$ is fully regular. All these notions cooincide with more classical notions of regular Lie groups (see e.g. \cite{KM,Neeb2007}) and are reviewed in \cite{MR2019,MR2016}.
\subsection{On the way to diffeological principal bundles}
	We describe here a class of objects that generalize principal bundles. Since then, to our knowledge, most results are stated for diffeological principal bundles \textbf{with local trivializations} while there exists examples where we do not know whether a system of local trivializations exist or not, and certainly examples where such local trivializations do not exist. 
	\begin{Definition}
	Let $G$ be a diffeological Lie group acting, smoothly and freely, on the right on a diffeological (total) space $P.$ Then we get a \textbf{diffeological principal bundle } $P$ with structure group $G$ over $P/G.$ Here, $P/G$ is equipped with any diffeology for which the canonical projection $\pi:P \rightarrow P/G$ is smooth.
\end{Definition}
\begin{rem}
	This definition is even weaker than the definition of ``structure quantique'' by Souriau in \cite{Sou} where the problem of the diffeology of the base was not considered.
\end{rem} 
Classicaly, connections are 1-forms $\theta \in \Omega^1(P,\mathfrak{g})$ (see \cite{Igdiff,Ma2013} for a comprehensive definition of differential forms, de Rham differential and wedge product) that are covariant under the right action of $G,$ that is, denoting the right action of $g \in G$ by $R_g,$ we have: 
$$ (R_g)_* \theta = Ad_{g^{-1}} \theta.$$
We keep this definition for a connection 1-form in the considered class of diffeological principal bundles, mimicking what \cite{Ma2013} did in a more specific context, even if the existence of connection 1-forms remains to be checked on the examples considered (see e.g. \cite{KW2016} for a slightly different framework in which similar technical issues are analyzed). In \cite{Igdiff}, another notion of connection is defined, developed independently in \cite{Ma2013} under the terminology of horizontal lifts, which consists in lifting smooth paths of the base to smooth horizontal paths on the total space. Here again, we have to investigate deeply this relationship, already established in a more restricted framework \cite{Ma2013}.  

		\begin{Theorem} If $G$ is fully regular, then any connection 1-form $\theta$ defines a horizontal path projection, i.e. a smooth map $$H : C^\infty(\R,P) \to C^\infty(\R,P)$$ such that $H\gamma(0) = \gamma(0)$ and $\theta(\partial_t H\gamma)=0,$ and which is $G-$equivariant.
			 
			Conversely, if there exists such a horizontal path projection $H,$ there exists only one connection 1-form $\theta$ which defines $H.$
	\end{Theorem}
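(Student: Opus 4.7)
The plan is to construct $H$ by solving a right-logarithmic ordinary differential equation in $G$. Given $\gamma\in C^\infty(\R,P)$, set $v_\gamma(t):= -\theta(\partial_t\gamma(t))\in\mathfrak{g}$; this is smooth in $t$ because $\theta$ is a diffeological 1-form and $t\mapsto\partial_t\gamma(t)$ comes from the smooth plot $\gamma$. Regularity of $G$ then yields a unique $g_\gamma\in C^\infty(\R,G)$ with $g_\gamma(0)=1_G$ and $\partial_t g_\gamma\cdot g_\gamma^{-1}=v_\gamma$, and the assignment $v\mapsto \exp(v)=g_\gamma$ is smooth by the very definition of regularity. I set $H\gamma(t):=\gamma(t)\cdot g_\gamma(t)$.

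I then check the three properties. The initial condition $H\gamma(0)=\gamma(0)\cdot 1_G=\gamma(0)$ is immediate. For horizontality, I differentiate the action $\mu\colon P\times G\to P$ at $(\gamma(t),g_\gamma(t))$ in the direction $(\partial_t\gamma,\partial_t g_\gamma)$ and use both the equivariance $\theta\circ (R_g)_* = Ad_{g^{-1}}\theta$ and the identity $\theta(X^*)=X$ for fundamental vector fields, obtaining
$$ \theta\bigl(\partial_t(\gamma\cdot g_\gamma)\bigr)\;=\;Ad_{g_\gamma^{-1}}\bigl(\theta(\partial_t\gamma)+\partial_t g_\gamma\cdot g_\gamma^{-1}\bigr)\;=\;Ad_{g_\gamma^{-1}}\bigl(\theta(\partial_t\gamma)+v_\gamma\bigr)\;=\;0. $$
For equivariance, if $\gamma'=\gamma\cdot h$ with $h\in G$ fixed, then $v_{\gamma'}=Ad_{h^{-1}}v_\gamma$ and the path $t\mapsto h^{-1}g_\gamma(t)h$ satisfies both the initial condition and the ODE with right-hand side $v_{\gamma'}$, so by uniqueness $g_{\gamma'}=h^{-1}g_\gamma h$ and $H\gamma'=(H\gamma)\cdot h$. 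Smoothness of $H\colon C^\infty(\R,P)\to C^\infty(\R,P)$ follows by composing the smooth maps $\gamma\mapsto v_\gamma$, $v_\gamma\mapsto g_\gamma$, and the pointwise right action; it is here that full regularity (rather than mere regularity) really enters, since it ensures that $\mathfrak{g}$ is itself regular as an abelian diffeological group, so that smooth dependence on the parameter $\gamma$ propagates correctly through the solution operator.

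For the converse, suppose two connection 1-forms $\theta_1$ and $\theta_2$ both generate the same $H$, producing paths $g^{(1)}_\gamma$ and $g^{(2)}_\gamma$ respectively. Freeness of the $G$-action applied to $\gamma\cdot g^{(1)}_\gamma=H\gamma=\gamma\cdot g^{(2)}_\gamma$ forces $g^{(1)}_\gamma=g^{(2)}_\gamma$ pointwise, and taking right logarithmic derivatives yields $\theta_1(\partial_t\gamma)=\theta_2(\partial_t\gamma)$ for every $\gamma\in C^\infty(\R,P)$. Any plot $p\colon O\to P$ can be probed at each $u_0\in O$ by the coordinate paths $t\mapsto p(u_0+te_i)$, whose derivatives span the pullback tangent directions, so equality on all such 1-parameter families forces $\theta_1=\theta_2$ as diffeological 1-forms. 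The delicate step in the whole argument is not the classical ODE calculation but the functional smoothness of $H$ in the diffeology of $C^\infty(\R,P)$, which is exactly what the full regularity hypothesis is designed to deliver.
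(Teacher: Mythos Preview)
Your proof is correct and follows essentially the same route as the paper: solve the right-logarithmic ODE $\partial_t g\cdot g^{-1}=-\theta(\partial_t\gamma)$ in the fully regular group $G$, set $H\gamma=\gamma\cdot g$, and verify horizontality and $G$-equivariance. You supply considerably more detail than the paper---which leaves the horizontality and equivariance as ``a direct computation'' and defers the entire converse to \cite{Igdiff}---so your explicit computations and your uniqueness argument for the converse are welcome elaborations rather than a different strategy.
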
 
\begin{proof}
	Let $\gamma \in C^\infty(\R,P).$ Consider the principal bundle $\gamma^*P$ over $\R$ and with structure group the fully regular group $G.$ Then the differential equation $$\partial_t g . g^{-1} = -\theta(\partial_t \gamma(t))$$ has a unique solution. We define $H(\gamma)(t) = \gamma(t).g(t)$ which is by construction a smooth map, and a direct computation shows that $$\theta(\partial_tH\gamma)=0$$ and that it is $G-$equivariant.  The property $H\circ H = H$ is obvious. The converse is noticed in \cite{Igdiff}.
\end{proof} 

We have now to investigate the lifts of paths on the base $P/G$ to horizontal paths on the total space $P.$
\begin{Theorem}
	Let $\theta$ be a connexion 1-form on $P.$ 
	If $G$ is fully regular, then there exists a horizontal lift $$L : C^\infty(\R,P/G) \rightarrow C^\infty(\R,P)$$
	such that $$ H = L \circ \pi$$
	 if and only if $$\p_1(P/G) = \pi_* \p_1(P).$$
	 Moreover, $L$ is smooth if and only if $$\p(P/G) = \pi_* \p(P).$$ 
\end{Theorem}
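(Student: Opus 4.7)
The plan is to establish the two equivalences separately. In both, the inclusions $\pi_* \p(P) \subset \p(P/G)$ and $\pi_* \p_1(P) \subset \p_1(P/G)$ come for free from smoothness of $\pi$, so only the reverse inclusions require argument. The main tool is the horizontal path projection $H$ from the previous theorem, whose construction via the right-logarithmic ODE $\partial_t g \cdot g^{-1} = -\theta(\partial_t \gamma)$ is smooth in its path argument because $G$ is fully regular.

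For the first equivalence, the direction $(\Rightarrow)$ is essentially tautological: given $L$, every $\bar\gamma \in C^\infty(\R,P/G) \subset \p_1(P/G)$ admits the smooth lift $L(\bar\gamma)$ with $\pi \circ L(\bar\gamma) = \bar\gamma$, and a general one-dimensional plot $\bar p \in \p_1(P/G)$ reads locally as $\bar\gamma \circ f$ and so lifts locally to $L(\bar\gamma) \circ f$. For $(\Leftarrow)$, I would use the hypothesis to pick, for each $\bar\gamma \in C^\infty(\R,P/G)$, a smooth $\gamma \in C^\infty(\R,P)$ with $\pi \circ \gamma = \bar\gamma$, and then set $L(\bar\gamma) := H(\gamma)$; the horizontality and $G$-equivariance of $H$ guarantee this is a legitimate horizontal lift, and arranging the choices so that a given $\gamma$ serves as the chosen lift of $\pi \circ \gamma$ secures the identity $H = L \circ \pi$.

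For the second equivalence, the direction $(\Leftarrow)$ proceeds by unwinding the functional diffeology: a plot $q : U \to C^\infty(\R,P/G)$ corresponds to a smooth map $\tilde q : U \times \R \to P/G$, which the assumption $\p(P/G) = \pi_* \p(P)$ allows us to lift (locally) to a plot $U \times \R \to P$; re-currying yields a plot $r : U \to C^\infty(\R,P)$ with $\pi \circ r = q$, and then $L \circ q = H \circ r$ is smooth since $H$ is. The converse $(\Rightarrow)$ uses a constant-path trick: given a plot $q : U \to P/G$, the family of constant paths $u \mapsto \mathrm{const}_{q(u)}$ is a plot of $C^\infty(\R,P/G)$, and since the horizontal-lift ODE is trivial for constant paths, $L$ carries it to a plot of constant paths $u \mapsto \mathrm{const}_{\tilde q(u)}$; smoothness of $L$ then forces $\tilde q : U \to P$ to be a plot of $P$ lifting $q$.

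I expect the delicate point to be not the equivalences themselves but the precise reading of the equation $H = L \circ \pi$: as written, $L(\bar\gamma)$ depends only on $\bar\gamma$, yet $H(\gamma)(0) = \gamma(0)$ depends on the fiber-point above $\bar\gamma(0)$. I would interpret $L$ as carrying an implicit choice of starting point on each fiber and check that, in the $(\Leftarrow)$ direction of the first equivalence, the choice function can be arranged (via the axiom of choice) so that $L \circ \pi$ is literally equal to $H$. A secondary technical point is the smoothness of $H$ as a map of path spaces, which I would either quote from the construction in the previous theorem or reverify by invoking smooth dependence on parameters for the logarithmic ODE in the fully regular group $G$.
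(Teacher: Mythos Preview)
Your proposal follows the same line as the paper's (very brief) proof: the paper simply records that, since $H$ exists, the existence of $L$ amounts to liftability of base paths, i.e.\ $\p_1(P/G)=\pi_*\p_1(P)$, and that smoothness of $L$, read through the currying/uncurrying description of the functional diffeologies on $C^\infty(\R,P)$ and $C^\infty(\R,P/G)$, gives the full condition $\p(P/G)=\pi_*\p(P)$. Your constant-path trick for the forward direction of the second equivalence and your caveat about how to read $H=L\circ\pi$ (which indeed also bears on your step ``$L\circ q = H\circ r$'') are elaborations the paper does not spell out, but the underlying strategy is identical.
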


\begin{proof}
	Since $H$ exists, $L$ exists if and only if any smooth path on $P/G$ is locally the projection of a smooth path of $P,$ which can be formulated as $\p_1(P/G)= \pi_*p_1(P).$
	
	Since $H$ is smooth, analysing the canonical family of plots generating the functional diffeologies of $C^\infty(\R,P)$ and $C^\infty(\R,P/G),$ we obtain the condition announced  $\p(P/G) = \pi_* \p(P).$ 
\end{proof}
Before stating next theorem, we need to weaken the notion of star-shaped set. 
\begin{Definition}
	Let $X$ be a diffeological spaces. Let $O \subset X$ be a non-empty, path connected set and let $x_0 \in O.$ the set $O$ is \textbf{weakly star-shaped} (for its subset diffeology) if there exists a smooth map $$ Rad_{x_0}: O \rightarrow C^\infty([0,1],O)$$ such that 
	\begin{itemize}
		\item $\forall x \in O, Rad_{x_0}(x)(0) = x_0$
		\item $\forall x \in O, Rad_{x_0}(x)(1) = x.$
	\end{itemize}
\end{Definition}
\begin{rem}
Trivially, a star-shaped set is weakly star-shaped
\end{rem} 
\begin{rem} By its smoothness, the map $Rad_{x_0}$ defined a smooth contraction from $O$ to $\{x_0\},$ which shows that a weakly star-shaped set needs to be smoothly contractible.\end{rem}
 \begin{Theorem} \label{local slice}
	Let $\theta$ be a connection on $P.$ 
	If:
	\begin{itemize}
		\item $G$ is fully regular,
		\item $L$ is smooth,
		\item the base $P/G$ can be covered by open subsets that are weakly star-shaped, 
	\end{itemize}
	then $P$ is locally trivial.
\end{Theorem}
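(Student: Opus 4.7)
My plan is to construct, for each weakly star-shaped open set $O \subset P/G$ with center $x_0$ and radial contraction $Rad_{x_0}$, an explicit local trivialization $\Phi : O \times G \to \pi^{-1}(O)$ by radial horizontal lifting. After fixing a basepoint $p_0 \in \pi^{-1}(x_0)$, I would define a section $\sigma : O \to \pi^{-1}(O)$ by
\[ \sigma(x) \;:=\; L\bigl(Rad_{x_0}(x)\bigr)(1), \]
where $L$ is the horizontal lift normalized so that its value at $t=0$ is $p_0$. Smoothness of $\sigma$ is then a composition of three smooth maps: $Rad_{x_0} : O \to C^\infty([0,1],O)$ (by the very definition of weakly star-shaped), the horizontal lift $L$ (smooth by hypothesis, since both the full-regularity of $G$ and the equality $\p(P/G) = \pi_*\p(P)$ hold), and the evaluation $C^\infty([0,1],P) \to P$ at $t=1$, which is always smooth in the functional diffeology.

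The candidate trivialization is then $\Phi(x,g) := \sigma(x) \cdot g$, which is smooth as a composition of $\sigma$, the projection to the second factor, and the smooth right $G$-action on $P$. Bijectivity onto $\pi^{-1}(O)$ is immediate: on each fiber $\pi^{-1}(x)$, freeness of the $G$-action gives a unique $g \in G$ with any $p \in \pi^{-1}(x)$ equal to $\sigma(x) \cdot g$.

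The decisive step, and where I expect the main obstacle, is showing that $\Phi^{-1}$ is smooth. Given a plot $\phi : U \to \pi^{-1}(O)$, one must exhibit $g : U \to G$, determined by $\phi(u) = \sigma(\pi(\phi(u))) \cdot g(u)$, as a plot of $G$. My approach is to reverse the lifting construction: using $\p(P/G) = \pi_*\p(P)$, lift the smooth family $u \mapsto \bigl(t \mapsto Rad_{x_0}(\pi(\phi(u)))(1-t)\bigr)$ to a smooth family of paths in $P$ starting at $\phi(u)$, then apply $H$ to horizontalize them; by the $G$-equivariance built into the horizontal path projection, the resulting endpoints in $\pi^{-1}(x_0)$ are exactly $p_0 \cdot g(u)$. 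Smoothness of the whole composition follows because every ingredient (lifting, $H$, evaluation at $t=1$) is smooth. The genuine difficulty is the final step of reading $g(u)$ off from the endpoint $p_0 \cdot g(u)$: this amounts to requiring that the orbit map $G \to \pi^{-1}(x_0)$, $h \mapsto p_0 \cdot h$, be a diffeomorphism onto its image for the subset diffeology, which is a smoothness property of the action sitting just outside the minimal definition of diffeological principal bundle adopted in the paper. Once this is granted, either as an additional principality axiom or as a consequence of the full regularity of $G$, the map $g$ is smooth and $\Phi$ becomes a diffeomorphism, giving the desired local trivialization over $O$.
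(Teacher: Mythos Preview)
Your construction is exactly the one the paper uses: the paper's entire proof consists of writing down the map
\[
\Phi : (x,g) \in O \times G \longmapsto \bigl(L(Rad_{x_0}(x))(1)\bigr)\cdot g
\]
and asserting that it ``furnishes the desired local trivializations''. So at the level of the core idea there is nothing to compare; you have reproduced the paper's argument.

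Where you differ is that you actually attempt to check that $\Phi$ is a diffeomorphism, not merely a smooth bijection. The paper's proof says nothing about the smoothness of $\Phi^{-1}$, while you correctly isolate the issue: after running the horizontal lift backwards to produce $p_0\cdot g(u)$ smoothly in $u$, one still needs the orbit map $h\mapsto p_0\cdot h$ to be a diffeomorphism of $G$ onto the fiber (for the subset diffeology) in order to extract $g(u)$ smoothly. Under the very weak definition of diffeological principal bundle adopted in the paper (a free smooth right action, with \emph{any} diffeology on $P/G$ making $\pi$ smooth), this property is not automatic and is not assumed. So the gap you flag is real, and it is a gap the paper simply does not address. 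Your proof is therefore at least as complete as the paper's, and your caveat about needing this extra principality condition is an honest observation rather than a defect of your argument.
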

\begin{proof}
	If $O \subset P/G$ is open and weakly star-shaped, then there exists a basepoint $x_0 \in O$ and ``radial'' paths starting at $x_0$ and covering $O,$ that is, a smooth map 
	$$Rad : O \rightarrow C^\infty([0,1],O)$$
	defined as before.
	Therefore, the map
	$$\Phi : (x,g) \in O \times G \mapsto \left(L(Rad_x)(1)\right) .g$$ furnishes the desired local trivializations. 
\end{proof}
	\section{Application to groups of pseudo-differential operators}
	\subsection{Preliminaries on classical pseudodifferential operators}
	We introduce groups and algebras of non-formal pseudodifferential operators needed to set up our equations. 
	Basic definitions are valid for real or complex finite-dimensional vector bundles $E$ 
	over a compact manifold $M$ without boundary whose typical fiber is a finite-dimensional 
	{real or complex} vector space $V$. 
	We begin with the following definition after \cite[Section 2.1]{BGV}.
	
	\begin{Definition} 
		The graded algebra of differential operators acting on the space of 
		smooth sections $C^\infty(M,E)$ is the algebra $DO(E)$ generated 
		by:
		
		$\bullet$ Elements of $End(E),$ the group of smooth maps $E \rightarrow 
		E$ leaving each fibre globally invariant and which restrict to linear 
		maps on each fibre. This group acts on sections of $E$ via (matrix) 
		multiplication;
		
		$\bullet$ The differentiation operators
		$$\nabla_X : g \in C^\infty(M,E) \mapsto \nabla_X g$$ where $\nabla$ 
		is a connection on $E$ and $X$ is a vector field on $M$.
	\end{Definition}
	
	Multiplication operators are operators of order $0$; differentiation 
	operators and vector fields are operators of order 1. In local 
	coordinates, a differential operator of order $k$ has the form
	$ P(u)(x) = \sum p_{i_1 \cdots i_r}(x) \nabla_{x_{i_1}} \cdots 
	\nabla_{x_{i_r}} u(x) \; , \quad r \leq k \; ,$
	{ in which $u$ is a (local) section} and the coefficients $p_{i_1 \cdots i_r}$ can be matrix-valued.
	The algebra $DO(M,E)$ is filtered by order: we note by $DO^k(M,E)$,$k \geq 0$, the differential operators of 
	order less or equal than $k$.  
	
	Now we embed $DO(M,E)$ into the algebra of classical 
	pseudodifferential operators. We need to assume that the reader is familiar with the basic facts on 
	pseudodifferential operators 
	defined on a vector bundle $E \rightarrow M$; these facts can be found for instance 
	in \cite{Gil}, in the review \cite[Section 3.3]{Pay2014}, and in the papers \cite{BK} and \cite{Wid} in which
	the authors construct a global symbolic calculus for pseudodifferential operators showing, for instance, how the 
	geometry of the base manifold $M$ furnishes an obstruction to generalizing 
	local formulas of composition and inversion of symbols.
	
	\vskip 6pt
	\noindent
	\textbf{Notations.} 
	We note by  $ PDO (M,E) $ the space of pseudodifferential operators on smooth sections of $E$, see 
	\cite[p. 91]{Pay2014}; by $ PDO^o (M,E)$ the space of pseudodifferential operators of order $o$; and by 
	$Cl(M,E)$ the space of classical pseudodifferential operators acting on 
	smooth sections of $E$, see \cite[pp. 89-91]{Pay2014}. 
	We also note by $Cl^o(M,E)= PDO^o(M,E) \cap Cl(M,E)$ the space of classical 
	pseudodifferential operators of order $o$, and by 
	$Cl^{\ast}(M,E)$ the group of units of $Cl(M,E)$. 
	
	\vskip 6pt
	
	A topology on spaces of classical pseudodifferential operators has 
	been described implicitely by Kontsevich and Vishik in \cite{KV1}: it is a Fr\'echet topology (and therefore
	it equips $Cl(M,E)$ with a smooth structure) such that each space 
	$Cl^o(M,E)$ is closed in $Cl(M,E).$ This topology is discussed in 
	\cite[pp. 92-93]{Pay2014}, see also  \cite{CDMP,PayBook,Scott} for more explicit and rigorous descriptions.
	We will refer to it in this work under the terminology of \textbf{Kontsevich-Vishik topology}. 
	
	We set
	$$ PDO^{-\infty}(M,E) = \bigcap_{o \in \Z} PDO^o(M,E) \; .$$
	It is well-known that $PDO^{-\infty}(M,E)$ is a two-sided ideal 
	of $Cl(M,E)$, closed for the Kontsevich-Vishik topology, see \cite{Gil} and also  \cite{Scott} for topological aspects. 
	This fact allows us to define the quotients
	$$\F Cl(M,E) = Cl(M,E) /  PDO^{-\infty}(M,E)\; ,$$
	and
	$$ \quad \F Cl^o(M,E) = Cl^o(M,E)  / PDO^{-\infty}(M,E)\; .$$
	The script font $\F$ stands for {\it  formal } pseudodifferential operators. The quotient $\mathcal{F}PDO(M,E)$ 
	is an algebra isomorphic to the space of formal symbols, see \cite{BK}, 
	and the identification is a morphism of $\mathbb{C}$-algebras for 
	the usual multiplication on formal symbols (appearing for instance in \cite[Lemma 1.2.3]{Gil} and
	\cite[p. 89]{Pay2014},  and in \cite[Section 1.5.2, Equation (1.5.2.3)]{Scott} for the particular case of
	classical symbols). Sets of formal classical pseudodifferential operators have been equipped with a topology, before and independently of the works of Kontsevich and Vishik, in \cite{ARS}. 
	
	
	\begin{Theorem} \label{2} 
		The groups $Cl^{0,*}(M,E)$ and $\mathcal{F}Cl^{0,*}(M,E)$, in which  
		${\mathcal F}Cl^{0,*}(M,E)$ is the group of units of the algebra 
		${\mathcal F}Cl^{0}(M,E)$, are regular Fr\'echet Lie groups equipped with smooth exponential maps. 
		Their Lie algebras are $Cl^{0}(M,E)$ and ${\mathcal F}Cl^{0}(M,E)$ respectively.
	\end{Theorem}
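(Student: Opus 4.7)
The plan is to first establish the Fréchet Lie group structure on $Cl^{0,*}(M,E)$, then obtain the exponential map and regularity from the algebra structure, and finally transfer everything to the formal quotient.

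First I would recall that the Kontsevich-Vishik topology makes $Cl^0(M,E)$ into a Fréchet algebra: the seminorms control symbol estimates in each local chart and the composition $Cl^0(M,E) \times Cl^0(M,E) \to Cl^0(M,E)$ is a continuous bilinear map, hence smooth in the Fréchet sense. The next step is to show that $Cl^{0,*}(M,E)$ is open in $Cl^0(M,E)$. This uses the classical fact that an order-zero classical pseudodifferential operator $A$ is invertible in $Cl(M,E)$ if and only if its principal symbol is invertible on $T^*M \setminus 0$ and $A$ is invertible as a bounded operator on $L^2(M,E)$, together with the theorem that in this case $A^{-1}$ remains classical of order $0$. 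Openness follows from continuity of the principal symbol map and of the bounded-operator norm with respect to the Kontsevich-Vishik topology. Smoothness of inversion then follows from differentiating $A \cdot A^{-1} = \mathrm{Id}$ as usual in a Fréchet algebra.

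Next I would construct the exponential map as the absolutely convergent series $\exp(A) = \sum_{n \geq 0} A^n / n!$: the continuity of composition in the Fréchet algebra yields term-by-term estimates showing convergence and joint smoothness in $A$, with $\exp(A) \in Cl^{0,*}(M,E)$ by its inverse $\exp(-A)$. Regularity is the main technical point: given a smooth path $v : \R \to Cl^0(M,E)$, I would solve the right-logarithmic ODE $\partial_t g \cdot g^{-1} = v$ with $g(0) = \mathrm{Id}$ using a Picard iteration on the scale of Sobolev spaces $H^s(M,E)$, where boundedness of order-zero operators gives uniform control, and then show a posteriori that the time-ordered exponential $g(t) = T\exp\bigl(\int_0^t v(s) ds\bigr)$ stays within $Cl^0(M,E)$ by propagating classical symbol expansions. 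The key estimates for this step are the ones used in \cite{Pay2014,PayBook,Scott} to control symbol asymptotics under composition, and I expect this step to be the main obstacle because it requires combining Banach-space ODE existence with the closedness of $Cl^0$ under the resulting flow.

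Finally, to treat $\mathcal{F}Cl^{0,*}(M,E)$, I would use that $PDO^{-\infty}(M,E)$ is a closed two-sided ideal of $Cl^0(M,E)$ for the Kontsevich-Vishik topology, so $\mathcal{F}Cl^0(M,E)$ inherits a Fréchet algebra structure from the quotient. The projection $\pi : Cl^0(M,E) \to \mathcal{F}Cl^0(M,E)$ is a continuous algebra morphism mapping $Cl^{0,*}(M,E)$ into $\mathcal{F}Cl^{0,*}(M,E)$, and openness of the units in the quotient, smoothness of inversion, and the corresponding exponential and regular structure are then deduced from the analogous properties on $Cl^0(M,E)$ by passing to the quotient. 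Both resulting Fréchet Lie groups have Lie algebras identified, respectively, with $Cl^0(M,E)$ and $\mathcal{F}Cl^0(M,E)$ via the kinematic tangent space at the identity.
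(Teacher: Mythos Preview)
Your outline is broadly correct, but it differs from the paper in an important way. The paper does not give a self-contained argument at all: it simply records that $Cl^{0}(M,E)$ and $\mathcal{F}Cl^{0}(M,E)$ are Fr\'echet algebras and then invokes Gl\"ockner's general theorem \cite{Gl2002} (via \cite{Ma2006}) that the group of units of such an algebra is open and is automatically a regular Fr\'echet Lie group with smooth exponential. Everything you propose to prove by hand --- openness of the units, smooth inversion, convergence of the exponential series, and especially the existence and smooth dependence of the time-ordered exponential --- is packaged inside that single citation.

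Your route is more constructive, which has pedagogical value, but two of your steps are genuinely delicate and you should be aware of what the general machinery is doing for you. First, continuity of multiplication in a Fr\'echet algebra does \emph{not} by itself guarantee convergence of $\sum A^{n}/n!$; you need local $m$-convexity (a system of submultiplicative seminorms), which the Kontsevich--Vishik topology does enjoy but which you should state explicitly. Second, your regularity argument --- solve on the Sobolev scale and then argue a posteriori that the solution remains in $Cl^{0}$ --- is exactly the hard analytic step that Gl\"ockner's theorem is designed to bypass; propagating the full classical symbol expansion through a time-ordered product is nontrivial and is not covered merely by the composition estimates in \cite{Pay2014,Scott}. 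If you want a self-contained proof along your lines, this is where most of the work lies; if you are willing to cite \cite{Gl2002}, the whole theorem reduces to checking that $Cl^{0}(M,E)$ and its formal quotient are continuous-inverse Fr\'echet algebras, which is what the paper does.
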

	
	Regularity is reviewed in \cite{MR2016} and also in Paycha's lectures,
	see \cite[p. 95]{Pay2014}. The Lie  group structure of $Cl^{0,*}(M,E)$ is discussed in 
	\cite[Proposition 4]{Pay2014}. Theorem \ref{2} is essentially proven in \cite{Ma2006}: it is noted in
	this reference that the results of \cite{Gl2002} imply that the 
	group $Cl^{0,*}(M,E)$ (resp. $\F Cl^{0,*}(M,E)\,$) is open in 
	$Cl^0(M,E)$ (resp. $\F Cl^{0}(M,E)\,$) and that therefore it is a regular 
	Fr\'echet Lie group. 
	\subsection{Diffeologies, topologies and quotients by smoothing operators}
		We equip the ideal of smoothing operators $Cl^{-\infty}$ with its Fr\'echet topology on smooth kernels, which can be understood as the subset topology for the Kontsevich-Vishik topology. The algebras $PDO(M,E),$ $Cl(M,E)$ and $\F Cl(M,E)$ are equipped with diffeologies such that: 
		\begin{enumerate}
			\item diffeologies make addition, multiplication and inversion smooth (i.e. they are diffeological algebras), and the $L^2-$adjoint operation $(.)^*$ is smooth.
			\item The subset diffeology on $PDO^{-\infty}(M,E)$ is the nebulae diffeology on smooth kernels,
			\item the quotient maps  $PDO(M,E)\rightarrow \F PDO(M,E)$ and $Cl(M,E) \rightarrow \F Cl(M,E)$ are smooth.
		\end{enumerate}, .  
	We have to notice that 
	\begin{itemize}
		\item if $Cl(M,E)$ equipped with the Kontsevich-Vishik topology,
		\item if $\F Cl(M,E)$ is equipped with the Adams, Ratiu and Schmid topology,  
	\end{itemize}
then their nebulae diffeology fulfill these assumptions. But we wish to avoid non necessary restrictions of frameworks in this work. Indeed, these topologies are actually adapted to spectral and microlocal analysis in the actual state of the art, and have never been questionned to our knowledge.  This is not the place here to make this investigation, but in order to leave this question open, we only choose to assume assumptions (1-3), keeping in mind the two topologies mentionned before (and their underlying nebulae diffeologies) as a field of application.

	 Let $Cl^*(M,E)$ (resp. $\F Cl^*(M,E)$) the group of invertible operators of $Cl(M,E)$ (resp. $\F Cl(M,E)$), equipped with their subset diffeology in $Cl$ (resp. $\F Cl(M,E)$). Notice that the Lie brackets exist in these algebras.

	\begin{Proposition} \label{qG}
		\item  If $$G = \{Id + R \in Cl^* \, | \, R \in PDO^{-\infty}\},$$ then
		$\F Cl^* = Cl^* / G$ algebraically
		\end{Proposition}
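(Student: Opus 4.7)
The plan is to apply the first isomorphism theorem to the restriction of the natural algebra quotient $q : Cl(M,E) \to \F Cl(M,E)$ to the group of units $Cl^*.$ I first record normality of $G,$ then identify the kernel, then address surjectivity.

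Normality of $G$ in $Cl^*$ follows directly from the fact that $PDO^{-\infty}(M,E)$ is a two-sided ideal of $Cl(M,E).$ Indeed, for $A \in Cl^*$ and $Id+R \in G,$ one has $A(Id+R)A^{-1} = Id + ARA^{-1}$ with $ARA^{-1} \in PDO^{-\infty};$ and the left-hand side lies in $Cl^*$ as a product of units, hence in $G.$

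The restriction of $q$ to $Cl^*$ takes values in $\F Cl^*$ and is a group homomorphism, because algebra homomorphisms send units to units. An element $A \in Cl^*$ lies in its kernel precisely when $A - Id \in PDO^{-\infty},$ that is, $A \in G.$ This identifies $Cl^*/G$ with the image $q(Cl^*) \subset \F Cl^*.$

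The main step, and the main obstacle, is surjectivity. Given $[B] \in \F Cl^*,$ one must produce $A \in Cl^*$ with $q(A) = [B].$ Any lift $B \in Cl$ admits a parametrix $C \in Cl$ with $BC - Id$ and $CB - Id$ in $PDO^{-\infty},$ so $B$ is Fredholm on the natural Sobolev completions, with kernel and cokernel consisting of smooth sections. When the Fredholm index of $B$ vanishes, one corrects $B$ by a finite-rank smoothing operator $S$ identifying $\ker B$ with a complement of the image of $B$; then $B+S \in Cl^*$ and $q(B+S) = [B].$ This index-zero step is the delicate point, since the index of a lift is invariant under perturbations by smoothing operators; it is likely what forces the qualifier ``algebraically'' in the statement, and an implicit restriction to those components of $\F Cl^*$ admitting an invertible lift.
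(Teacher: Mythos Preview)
Your approach is the paper's: apply the first isomorphism theorem to $q|_{Cl^*}$, identify the kernel as $G$, and for surjectivity correct an arbitrary lift by a finite-rank smoothing operator. The paper adds one technical step you omit --- it first reduces to order zero by composing the lift with $(Id+\Delta)^{-o/2}$, so that the Fredholm discussion takes place directly on $L^2$; your phrase ``natural Sobolev completions'' points in the same direction but does not carry this out, and the check that the inverse of the corrected operator again lies in $Cl$ (rather than merely in bounded operators) is cleanest after this reduction.

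Where you and the paper genuinely diverge is on the index obstruction. You correctly observe that a smoothing (in particular finite-rank) perturbation preserves the Fredholm index, so the correction $B\mapsto B+S$ can produce an invertible operator only when the lift has index zero, and you stop there. The paper instead writes the correction as $A' = A + \lambda P_I + \mu P_K$, with $P_K$ and $P_I$ the orthogonal projections onto kernel and cokernel, and asserts that suitable $(\lambda,\mu)\in\R^2$ make $A'$ invertible; this assertion is not justified and is subject to exactly the obstruction you flag, since $A'$ differs from $A$ by a finite-rank operator. So your caution is well placed rather than a defect relative to the paper. Your reading of the word ``algebraically'' as encoding an implicit index-zero restriction is, however, not what is meant --- in the paper it only signals that the identification $\F Cl^* = Cl^*/G$ is asserted at the level of abstract groups, with topology and diffeology set aside.
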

	\begin{proof}
		We have directly that $Cl^*(M,E)/G \subset \F Cl^*(M,E).$Now, let $A \in Cl(M,E) $ such that its associated formal operator $a$ is invertible in $\F Cl(M,E)$.Then $a^{-1}$ is the formal operator associated to some operators $B \in Cl(M,E) $ (resp. $PDO(M,E)$) such that $AB = Id + R,$ where $R \in PDO^{-\infty}(M,E).$
		
		Let us first assume that $A$ (and $a$) are of order $0.$ Then the partial symbol of order $0$ (the principal symbol) is invertible and hence the operator $A$ is Fredholm. Setting $p_K$ the $L^2$ orthogonal projection on its kernel, and $p_I$ the orthogonal projection on the orthogonal complement of the image of $A.$ These two subspaces are finite dimensional and made of smooth sections. Therefore, one can find $(\lambda, \mu) \in \R^2$ such that $$A' = A + \lambda P_I + \mu P_K $$
		and such that $A' \in Cl^{0,*}(M,E).$ By construction, the formal symbol associated to $A'$ is $a,$ and we get \begin{eqnarray*} R &=& (A')^{-1} A - Id \\ &= &(A')^{-1} (A'-\lambda P_I - \mu P_K)  - Id \\ &=& (A')^{-1} (-\lambda P_I - \mu P_K) \in PDO^{-\infty}.(M,E).\end{eqnarray*}
		
		Now, of $ord(A) = o \in \Z,$ let $\Delta$ be a (positive) Laplacian on $C^\infty(M,E)$ then $(Id + \Delta)^{-o/2} A$ is of order $0,$ to which one can apply the last construction. We finish the proof by remarking that smoothing operators are not invertible in $Cl(M,E).$
	\end{proof}
	
\subsection{Diffeological short exact sequences and principal bundles with connections}
The construction of $\F Cl(M,E)$ can be rephrased into a short exact sequence
	$$0 \rightarrow PDO^{-\infty}(M,E) \rightarrow Cl(M,E) \rightarrow \F Cl(M,E) \rightarrow 0 $$
	which is \textit{diffeological}, that is, each arrow is smooth in the diffeological sense according to assumptions (1-3). According to Proposition \ref{qG} and under the same diffeological conditions, 
	we get a short diffeological exact sequence of groups
	\begin{equation} \label{eq} 1 \rightarrow G \rightarrow Cl^*(M,E) \rightarrow \F Cl^*(M,E) \rightarrow 1. \end{equation}
	We get analogous properties for diffeological exact sequences of bounded operators 
	$$0 \rightarrow PDO^{-\infty}(M,E) \rightarrow Cl^0(M,E) \rightarrow \F Cl^0(M,E) \rightarrow 0 $$
	$$\hbox{and }1 \rightarrow G \rightarrow Cl^{0,*}(M,E) \rightarrow \F Cl^{0,*}(M,E) \rightarrow 1.$$
	These exact sequences
	have actually no local slice $\F Cl^{*}(M,E) \rightarrow Cl^{*}(M,E)$ or $\F Cl^{0,*}(M,E) \rightarrow Cl^{0,*}(M,E),$
	in other words the diffeological $G-$principal bundle $Cl^{0,*}$ over $\F Cl^{0,*}$ has actually no local trivialization (according to the  Adams, Ratiu and Schmid topology which is the only one studied explicitely).  This is at this point that we carry now a new element, that generalize the constructions given in \cite{Ma2021-1} which treats the case $M=S^1.$
	\begin{Theorem} \label{smoothing}
		There exists $PDO^{-\infty}-$valued connections on the principal bundle $Cl^{*}(M,E)$ over  $\F Cl^{*}(M,E).$ In particular,
		 we define, for $s \in Cl^{-\infty}(M,E)$ and $(a,b)\in (Cl(M,E))^2,$ three such classes of connections:
		$$ \Theta^{s,l}_a b = sas^* b,$$
		$$ \Theta^{s,r}_a b = bsas^* ,$$
		$$ \Theta^{s,[]}_a b = [sas^* ,b],$$
		and extend them by right-invariance on the full space $TCl^*(M,E).$
	\end{Theorem}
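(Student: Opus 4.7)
The plan is to verify, for each of the three formulas, the three properties that characterize a connection 1-form in the sense adopted in this paper: (i) values in the Lie algebra $PDO^{-\infty}(M,E)$ of the structure group $G$; (ii) smoothness with respect to the underlying diffeology; and (iii) $G$-equivariance, $(R_{g})_{*}\theta = \mathrm{Ad}_{g^{-1}}\theta$ for $g \in G$. These three properties are exactly what the paper's definition of a connection demands, so once they are established the theorem follows.

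Properties (i) and (ii) should be essentially immediate. Since $s \in Cl^{-\infty}(M,E)$ and $PDO^{-\infty}(M,E)$ is a two-sided ideal in $Cl(M,E)$, for every $a \in Cl(M,E)$ the element $sas^{*}$ lies in $PDO^{-\infty}$, so $sas^{*}b$, $bsas^{*}$ and $[sas^{*},b]$ all lie in $\mathrm{Lie}(G) = PDO^{-\infty}$. Smoothness is immediate from the standing diffeological assumptions (1)--(3): multiplication, addition and the $L^{2}$-adjoint are smooth on $Cl(M,E)$, and the subset diffeology on $PDO^{-\infty}(M,E)$ coincides with its nebulae diffeology on smoothing kernels. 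Thus each of the three maps $(a,b) \mapsto \Theta^{s,\bullet}_{a}b$ is smooth and linear in the tangent variable, and since right-translation is a diffeomorphism of $Cl^{*}(M,E)$, the right-invariant extension to all of $TCl^{*}(M,E)$ is again smooth.

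The core of the argument is the equivariance (iii). Because $G = \{Id + r : r \in PDO^{-\infty}\}$, conjugation by any $g \in G$ preserves $PDO^{-\infty}$, so $\mathrm{Ad}_{g^{-1}}\theta$ is automatically well-defined as a $PDO^{-\infty}$-valued form. The commutator variant $\Theta^{s,[]}$ is the cleanest case: the identity $\mathrm{Ad}_{g^{-1}}[sas^{*},b] = [g^{-1}sas^{*}g,\, g^{-1}bg]$ matches term-by-term the pullback of the form under right multiplication, once the right-invariant extension is written out. The left and right variants reduce, after exploiting the ideal property, to elementary manipulations in the associative algebra $Cl(M,E)$.

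The main obstacle I anticipate is precisely the equivariance calculation for the left and right variants, where one has to track how one-sided multiplication by the smoothing operator $sas^{*}$ interacts with right-translation by $g = Id + r \in G$. As in the $M = S^{1}$ case treated in \cite{Ma2021-1}, the calculation ultimately rests on the two-sided ideal property of $PDO^{-\infty}$ inside $Cl(M,E)$ together with the regularity of $G$ inherited from Theorem \ref{2}; it is this combination that allows the formula defined at the identity to be transported consistently to the whole principal bundle.
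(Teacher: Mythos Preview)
Your proposal follows essentially the same route as the paper: smoothness is deduced from the standing diffeological assumptions on multiplication, addition and adjoint, and $G$-covariance is left to a direct computation, exactly as in the paper's proof. Your explicit invocation of the two-sided ideal property to get $PDO^{-\infty}$-valuedness is a detail the paper leaves implicit; on the other hand, your closing appeal to the regularity of $G$ from Theorem~\ref{2} is unnecessary here, since covariance is a purely algebraic identity and the paper does not invoke regularity at this point.
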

\begin{proof}
	Recall that, for the diffeologies under consideration, multiplication, adjoint and addition are smooth. Therefore, the right invariant vector fields on $TCl^*(M,E)$ drfined by 
	$$ g \in Cl^*(M,E) \mapsto g^{-1}a g,$$ 
		$$ g \in Cl^*(M,E) \mapsto g^{-1}b g,$$ 
			$$ g \in Cl^*(M,E) \mapsto g^{-1}s g$$
				$$\hbox{ and } g \in Cl^*(M,E) \mapsto g^{-1}s^* g$$ are smooth vector fields, and for the same reasons, when evaluation on right-invariant vector fields above, the map
				$$(s,a,b) \mapsto (\Theta^{s,l}_a b,\Theta^{s,r}_a b ,\Theta^{s,[]}_a b)$$ is smooth.  
				
	By direct computation, each connection form is $G-$ covariant.
\end{proof}
\begin{rem} By restriction, these constructions also show  that there exists $PDO^{-\infty}-$valued connections on the principal bundle $Cl^{0,*}(M,E)$ over $\F Cl^{0,*}(M,E).$ \end{rem}

Therefore, we have the following: 

\begin{rem}
	If there exists diffeologies on  $Cl^*(M,E)$ and $\F Cl^*(M,E)$ such that Theorem \ref{local slice} applies to at least one $PDO^{-\infty}(M,E)-$valued connection, then there exists local trivializations for the diffeological principal bundle $Cl^*(M,E)$ over $\F Cl^*(M,E)$ and hence local slices to the exact sequence (\ref{eq}).
\end{rem}
	
	
	

\subsection{Yet another class of connections and the Schwinger cocycle when $M=S^1$}
	There is another class of smoothing connections defined by 
	$$\Theta^{\epsilon(D)}_a b = b [a,\epsilon(D)]$$
	where $\epsilon(D) = D . |D|^{-1},$ and $D = -i \frac{d}{dx}.$
	This connection fulfills also the requirements of Theorem \ref{smoothing} by the same arguments, see e.g. \cite{Ma2021-1}. 
	If the diffeology on $Cl^*$ has ``enough'' 2-dimensional plots, one can define the curvature of the connection \cite{Ma2013}. 
	
	The same property of the curvature holds for a second class of connections, studied in \cite{Mapreprint}: 
	$$ \Theta^+_a b = \frac{1}{2}b (1 + \epsilon(D))a.$$
	\begin{rem}We have here to notice that $\Theta^+$ is not $PDO^{-\infty}(M,E)-$valued, but its curvature is. \end{rem}
	Then we get, following \cite{Mapreprint} and denoting by $\Omega$ the curvature of $\Theta^+:$
	
	\begin{Theorem}
		$tr(\Omega(a,b)(1+\epsilon))$ is cohomologous to the Schwinger cocycle and $tr(\Omega^2(a,b,c,d)(1+\epsilon))$ has non-vanishing cohomology class.		\end{Theorem}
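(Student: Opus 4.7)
The plan is to first compute the curvature 2-form $\Omega$ of the connection $\Theta^+$ at the identity of $Cl^{0,*}(M,E)$, then pair it with $(1+\epsilon(D))$ under the operator trace on $PDO^{-\infty}$, and finally reduce modulo Lie-algebra coboundaries to the standard form of the Schwinger cocycle $c(a,b)=\tfrac14\tr\bigl(\epsilon[\epsilon,a][\epsilon,b]\bigr)$. For the right-invariant extension of $\Theta^+$ the Maurer--Cartan style structure equation $\Omega = d\Theta^+ + \Theta^+ \wedge \Theta^+$ reduces, on two tangent vectors $a,b$ at $1$, to an explicit polynomial in $a$, $b$ and $\epsilon$. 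Although $\Theta^+$ itself is not $PDO^{-\infty}$-valued, the cancellations in the structure equation produce only commutators of the form $[a,\epsilon]$ and $[b,\epsilon]$, which are smoothing; this is exactly the remark preceding the statement and it ensures that $\Omega(a,b)\in PDO^{-\infty}(M,E)$, so that the trace makes sense.

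Next I would manipulate $\tr\bigl((1+\epsilon)\,\Omega(a,b)\bigr)$ using cyclicity of $\tr$ on $PDO^{-\infty}(M,E)$ and the identity $\epsilon^2=\mathrm{Id}$ modulo the finite-rank projector onto $\ker D$ (which contributes only a smoothing correction). Rewriting everything in terms of $[\epsilon,a]$ and $[\epsilon,b]$ should display the expression as a non-zero multiple of $\tr\bigl(\epsilon[\epsilon,a][\epsilon,b]\bigr)$ plus terms of the form $a\mapsto \tr(\epsilon\,[\epsilon,a])$ which are Lie-algebra coboundaries. This identifies the class $[\tr((1+\epsilon)\Omega)]$ with the Schwinger class in $H^2\bigl(\F Cl^0(M,E)\bigr)$.

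For the 4-form $\Omega\wedge\Omega$, I would expand the antisymmetrisation over $(a,b,c,d)$, insert $(1+\epsilon)$, apply $\tr$, and check the cocycle identity directly from the Bianchi identity $d\Omega+[\Theta^+,\Omega]=0$ together with the ad-invariance of $\tr$ on $PDO^{-\infty}$. The non-trivial step, and the main obstacle, is showing that the resulting cohomology class in $H^4$ does not vanish. The plan here is to restrict the cocycle to a finite-dimensional subalgebra obtained from loops into $U(N)$ for large $N$, modelled inside $Cl^{0,*}(S^1,\mathbb{C}^N)$, and evaluate on a quadruple $(a,b,c,d)$ representing a generator of $\pi_4(U(N))$ via the standard Chern--Simons-type construction. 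The evaluation should reproduce, up to a non-zero constant, the degree-$4$ component of the Chern character of the universal Fredholm family carried by $Cl^{0,*}\to \F Cl^{0,*}$, whose non-vanishing is classical.

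This last step is genuinely the hard part: the formal computations of $\Omega$ and $\Omega\wedge\Omega$ are routine algebra once the structure equation is written down, but the non-triviality of the higher class requires connecting it to topological invariants of the Fredholm/index type --- essentially the same phenomenon that makes the higher Mickelsson--Rajeev cocycles non-trivial. I would therefore devote the bulk of the write-up to this comparison, following \cite{Mapreprint} in the $M=S^1$ case, and only sketch the algebraic identities leading to the Schwinger form of the 2-cocycle.
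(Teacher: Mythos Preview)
The paper does not prove this theorem in the text; it states the result as following from \cite{Mapreprint}. There is therefore no in-paper argument to compare your proposal against, and what you have written is an attempt to reconstruct the argument of the cited preprint.

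Your outline for the degree-$2$ part is the expected one and is plausible: compute the curvature of $\Theta^+$ at the identity via the structure equation, observe that the non-smoothing contributions cancel so that $\Omega(a,b)$ is expressed through the smoothing commutators $[\epsilon,a]$ and $[\epsilon,b]$, take the trace against $(1+\epsilon)$, and reduce modulo coboundaries to the standard Schwinger form. This is routine once the curvature is written out, as you say.

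The degree-$4$ argument, however, has a genuine gap. You propose to detect non-triviality by restricting to loops in $U(N)$ and evaluating on ``a quadruple $(a,b,c,d)$ representing a generator of $\pi_4(U(N))$''. But $\pi_4(U(N))=0$ for $N\ge 3$, so there is no such generator in the stable range, and in any case a Lie-algebra $4$-cocycle is not paired against elements of $\pi_4$ of the group. Non-vanishing must be established either by a direct Lie-algebra-cohomology argument (exhibiting $a,b,c,d$ on which the cocycle is non-zero and showing that no $3$-cochain can reproduce this value on closed chains), or by identifying the cocycle with a left-invariant closed $4$-form and relating it to a characteristic class of the Fredholm bundle. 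Your final paragraph gestures at the second route, but the mechanism you invoke is wrong: the correct topological input on the loop-group side is the non-vanishing of the transgressed class in $H^5(U(N))$ (equivalently, the degree-$4$ piece of the Chern character of the index bundle), not $\pi_4(U(N))$. As written, the non-vanishing step does not go through and needs to be replaced by one of these two arguments.
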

	
	\section{Open problems and perspectives} 
	The principal bundle construction for $Cl^*$ gives new geometric lights about the correspondence between non-formal and formal pseudo-differential operators. To our knowledge, this correspondence is only treated actually from the (algbraic) viewpoint of K-theory. This approach raises two questions: 
	\begin{enumerate}
		\item the question of the adequate topologies for such groups of operators. this question is difficult due to the hard technicalities that one can meet when dealing with such objects. But the question of local maps from $\F Cl^*(M,E)$ to $Cl^*(M,E)$ has its own interest. One can ask if the statement of homotopy equivalence between $Cl^*(M,E)$ and $^\F Cl^*(M,E)$ can be obtained through a weak contractibility of $G.$ The necessary results actually do not exist in the context of diffeological principal bundles in the class that we have to consider.
		\item This first question is related to index theory, actually again only reated through K-theory, r in a slightly different approach through the restricted linear group following \cite{PS} for $M=S^1.$ This leads to a second question: in our framework, the index cocycles appear as quite natural differential geometric objects, even for unbounded operators.  Indeed, the results by \cite{MW2017} may be applied if the necessary assumptions were fulfilled, but this is actually for us an open question. 
	\end{enumerate}  
	Therefore, a key problem remains in the investigation of the topologies of interest for $Cl(M,E)$ and $\F Cl(M,E)$ and their underlying nebulae diffeologies. We have at hand an example of such topologies, but the key properties for the application of our results still need to be investigated. 

\end{document}